\documentclass[11pt,leqno]{article}
\usepackage[margin=1in]{geometry} 
\usepackage{amssymb,amsfonts,amsmath,bbm,mathrsfs,stmaryrd,mathtools}
\usepackage{xcolor}
\usepackage{url}

\usepackage{graphicx}

\usepackage{accents}

\usepackage{extarrows}

\usepackage[shortlabels]{enumitem}
\usepackage{tensor}

\usepackage{xr}
\usepackage[T1]{fontenc}
\usepackage[utf8]{inputenc}

\usepackage[colorlinks,
linkcolor=black!75!red,
citecolor=blue,
pdftitle={},
pdfproducer={pdfLaTeX},
pdfpagemode=None,
bookmarksopen=true,
bookmarksnumbered=true,
backref=page]{hyperref}

\usepackage{tikz}
\usetikzlibrary{arrows,calc,decorations.pathreplacing,decorations.markings,decorations.shapes,intersections,shapes.geometric,through,fit,shapes.symbols,positioning,decorations.pathmorphing}

\makeatletter
\newlength\zig@L
\newlength\zig@La
\newlength\zig@Lb

\newcommand{\xzigrightarrow}[2][]{%
  \mathrel{%
    \settowidth{\zig@La}{$\scriptstyle #2$}%
    \settowidth{\zig@Lb}{$\scriptstyle #1$}%
    \zig@L=\zig@La\relax
    \ifdim\zig@Lb>\zig@L \zig@L=\zig@Lb\fi
    \advance\zig@L by 2.2em\relax
    \tikz[baseline=-0.65ex]{%
      \draw[->,
            line cap=round,
            decorate,
            decoration={zigzag,segment length=4pt,amplitude=1.1pt}]%
        (0,0) -- (\zig@L,0)
        node[midway,above=2pt] {$\scriptstyle #2$}%
        \if\relax\detokenize{#1}\relax\else
          node[midway,below=2pt] {$\scriptstyle #1$}%
        \fi
      ;
    }%
  }%
}
\makeatother

\makeatletter
\newcommand{\squigjoin}{1mu} 

\def\sqleft@{\sim}                    
\def\sqmid@{\sim\mkern-\squigjoin}    

\def\rightsquigarrowfill@{%
  \arrowfill@{\sqleft@}{\sqmid@}{\mkern-4mu\succ}%
}

\newcommand{\xrightsquigarrow}[2][]{%
  \ext@arrow 0359\rightsquigarrowfill@{#1}{#2}%
}
\makeatother

\makeatletter
\newcommand*\circled[1]{\tikz[baseline=(char.base)]{
    \node[shape=circle, draw, inner sep=0pt, 
    minimum height={\f@size},] (char) {\vphantom{WAH1g}#1};}}
\makeatother

\makeatletter
\DeclareRobustCommand\widecheck[1]{{\mathpalette\@widecheck{#1}}}
\def\@widecheck#1#2{%
    \setbox\z@\hbox{\m@th$#1#2$}%
    \setbox\tw@\hbox{\m@th$#1%
       \widehat{%
          \vrule\@width\z@\@height\ht\z@
          \vrule\@height\z@\@width\wd\z@}$}%
    \dp\tw@-\ht\z@
    \@tempdima\ht\z@ \advance\@tempdima2\ht\tw@ \divide\@tempdima\thr@@
    \setbox\tw@\hbox{%
       \raise\@tempdima\hbox{\scalebox{1}[-1]{\lower\@tempdima\box
\tw@}}}%
    {\ooalign{\box\tw@ \cr \box\z@}}}
\makeatother

\usepackage{braket}

\usepackage[amsmath,thmmarks,hyperref]{ntheorem}
\usepackage{cleveref}

\newcommand\nthalias[1]{\AddToHook{env/#1/begin}{\crefalias{lemma}{#1}}}

\nthalias{definition}
\nthalias{example}
\nthalias{examples}
\nthalias{remark}
\nthalias{remarks}
\nthalias{convention}
\nthalias{notation}
\nthalias{construction}
\nthalias{sketch}
\nthalias{theoremN}
\nthalias{propositionN}
\nthalias{corollaryN}
\nthalias{lemma}
\nthalias{proposition}
\nthalias{corollary}
\nthalias{theorem}
\nthalias{conjecture}
\nthalias{question}
\nthalias{assumption}

\creflabelformat{enumi}{#2#1#3}

\crefname{section}{Section}{Sections}
\crefformat{section}{#2Section~#1#3} 
\Crefformat{section}{#2Section~#1#3} 

\crefname{subsection}{\S}{\S\S}
\AtBeginDocument{%
  \crefformat{subsection}{#2\S#1#3}%
  \Crefformat{subsection}{#2\S#1#3}%
}

\crefname{subsubsection}{\S}{\S\S}
\AtBeginDocument{%
  \crefformat{subsubsection}{#2\S#1#3}%
  \Crefformat{subsubsection}{#2\S#1#3}%
}

\theoremstyle{plain}

\newtheorem{lemma}{Lemma}[section]

\newtheorem{corollary}[lemma]{Corollary}
\newtheorem{theorem}[lemma]{Theorem}

\theoremstyle{plain}
\theoremnumbering{Alph}
\newtheorem{theoremN}{Theorem}

\theoremstyle{plain}
\theorembodyfont{\upshape}
\theoremsymbol{\ensuremath{\blacklozenge}}

\newtheorem{example}[lemma]{Example}

\newtheorem{remark}[lemma]{Remark}
\newtheorem{remarks}[lemma]{Remarks}

\crefname{definition}{definition}{definitions}
\crefformat{definition}{#2definition~#1#3} 
\Crefformat{definition}{#2Definition~#1#3} 

\crefname{ex}{example}{examples}
\crefformat{example}{#2example~#1#3} 
\Crefformat{example}{#2Example~#1#3} 

\crefname{exs}{example}{examples}
\crefformat{examples}{#2example~#1#3} 
\Crefformat{examples}{#2Example~#1#3} 

\crefname{remark}{remark}{remarks}
\crefformat{remark}{#2remark~#1#3} 
\Crefformat{remark}{#2Remark~#1#3} 

\crefname{remarks}{remark}{remarks}
\crefformat{remarks}{#2remark~#1#3} 
\Crefformat{remarks}{#2Remark~#1#3} 

\crefname{convention}{convention}{conventions}
\crefformat{convention}{#2convention~#1#3} 
\Crefformat{convention}{#2Convention~#1#3} 

\crefname{notation}{notation}{notations}
\crefformat{notation}{#2notation~#1#3} 
\Crefformat{notation}{#2Notation~#1#3} 

\crefname{table}{table}{tables}
\crefformat{table}{#2table~#1#3} 
\Crefformat{table}{#2Table~#1#3}

\crefname{lemma}{lemma}{lemmas}
\crefformat{lemma}{#2lemma~#1#3} 
\Crefformat{lemma}{#2Lemma~#1#3} 

\crefname{proposition}{proposition}{propositions}
\crefformat{proposition}{#2proposition~#1#3} 
\Crefformat{proposition}{#2Proposition~#1#3} 

\crefname{propositionN}{proposition}{propositions}
\crefformat{propositionN}{#2proposition~#1#3} 
\Crefformat{propositionN}{#2Proposition~#1#3} 

\crefname{corollary}{corollary}{corollaries}
\crefformat{corollary}{#2corollary~#1#3} 
\Crefformat{corollary}{#2Corollary~#1#3} 

\crefname{corollaryN}{corollary}{corollaries}
\crefformat{corollaryN}{#2corollary~#1#3} 
\Crefformat{corollaryN}{#2Corollary~#1#3} 

\crefname{theorem}{theorem}{theorems}
\crefformat{theorem}{#2theorem~#1#3} 
\Crefformat{theorem}{#2Theorem~#1#3} 

\crefname{theoremN}{theorem}{theorems}
\crefformat{theoremN}{#2theorem~#1#3} 
\Crefformat{theoremN}{#2Theorem~#1#3} 

\crefname{enumi}{}{}
\crefformat{enumi}{#2#1#3}
\Crefformat{enumi}{#2#1#3}

\crefname{assumption}{assumption}{Assumptions}
\crefformat{assumption}{#2assumption~#1#3} 
\Crefformat{assumption}{#2Assumption~#1#3} 

\crefname{construction}{construction}{Constructions}
\crefformat{construction}{#2construction~#1#3} 
\Crefformat{construction}{#2Construction~#1#3} 

\crefname{sketch}{sketch}{Sketches}
\crefformat{sketch}{#2sketch~#1#3} 
\Crefformat{sketch}{#2Sketch~#1#3} 

\crefname{question}{question}{Questions}
\crefformat{question}{#2question~#1#3} 
\Crefformat{question}{#2Question~#1#3} 

\crefname{equation}{}{}
\crefformat{equation}{(#2#1#3)} 
\Crefformat{equation}{(#2#1#3)}

\numberwithin{equation}{section}

\theoremstyle{nonumberplain}
\theoremsymbol{\ensuremath{\blacksquare}}

\newtheorem{proof}{Proof}

\newcommand\bG{{\mathbb G}}

\newcommand\bQ{{\mathbb Q}}
\newcommand\bR{{\mathbb R}}
\newcommand\bS{{\mathbb S}}
\newcommand\bT{{\mathbb T}}

\newcommand\bZ{{\mathbb Z}}

\newcommand\cB{{\mathcal B}}

\newcommand\wh{\widehat}

\DeclareMathOperator{\Ad}{Ad}

\DeclareMathOperator{\Aut}{\mathrm{Aut}}

\newcommand{\comment}[1]{}

\title{Ergodic-transformation centralizers and essentially non-compact graphing symmetry}
\author{Alexandru Chirvasitu}

\begin{document}

\date{}

\newcommand{\Addresses}{{
  \bigskip
  \footnotesize

  \textsc{Department of Mathematics, University at Buffalo}
  \par\nopagebreak
  \textsc{Buffalo, NY 14260-2900, USA}  
  \par\nopagebreak
  \textit{E-mail address}: \texttt{achirvas@buffalo.edu}

}}

\maketitle

\begin{abstract}
  We prove that for every ergodic transformation $T$ on an infinite standard probability space both the automorphism group (i.e. centralizer) $\mathrm{Aut}(T)$ and its reversing automorphism group are realizable as symmetry groups of graphings. This is an analogue of Sabidussi's realization of arbitrary graph-automorphism groups, and provides numerous examples of graphing automorphism groups carrying no compatible compact topology, answering a question of Lovasz'. Another consequence of discussion and ensuing constructions is the existence of large mutually locally-globally equivalent graphing families with highly variable symmetry. 
\end{abstract}

\noindent \emph{Key words:
  Borel graph;
  automorphism group;
  centralizer;
  ergodic;
  graphing;
  hyperfinite;
  local-global equivalence;
  standard measure space
}

\vspace{.5cm}

\noindent{MSC 2020: 20B27; 22F50; 28A60; 37A25; 28A05; 22C05; 05C63; 05C76

  
}


\section*{Introduction}

We remind the reader that a \emph{graphing} \cite[Definition 3.1]{hls_lim-grph} $\Gamma=(V,\cB,E,\mu)$ consists of
\begin{itemize}[wide]
\item a \emph{standard probability space} $(V,\cB,\mu)$ (these are the \emph{Lebesgue (probability) spaces} of \cite[\S 1.4 Definition 4.5]{pet_erg_1989}: complete probability spaces isomorphic to $[0,1]$ with its Lebesgue-measure structure together with countably many point masses);
\item together with a Borel set of edges $E\in \cB\times \cB$ constituting a loop-less graph structure on $(V,E)$ of bounded degree;
\item such that
  \begin{equation}\label{eq:edge.meas}
    \forall\left(A,B\in \cB\right)
    \left(
      \int_A\left(
        \sharp(x\to B):=\sharp\left\{\text{edges $x\to B$}\right\}
      \right)\mathrm{d}\mu(x)
      =
      \int_B\sharp\left(y\to A\right)\mathrm{d}\mu(y)\right).
  \end{equation}  
\end{itemize}
The common quantity in the equality just stipulation then defines the \emph{edge measure}
\begin{equation*}
  \eta(A\times B)
  =
  \eta_{\Gamma}(A\times B)
  :=
  \int_A \sharp(x\to B)\mathrm{d}\mu(x).
\end{equation*}

The present note is motivated in part by questions raised in \cite[\S 5]{MR4110364}, following up on the procedure, introduced there, of \emph{compactifying} a graphing. Said questions point towards possible ramifications pertaining to graphing automorphisms:
\begin{enumerate}[(a),wide]
\item\label{item:intro.which.aut} Will some sensible notion of $\Aut(\Gamma)$ be preserved by that compactification operation?

\item\label{item:intro.aut.cpct} And, taking a cue from \cite[Theorem 10]{MR3272377}, will that automorphism group be compact in an appropriate topology?
\end{enumerate}
As far as \Cref{item:intro.which.aut} goes, \cite[Definition 5.1]{MR5115419} proposes what appears to be the natural choice: automorphisms of the measure algebra $(\cB,\mu)$, respecting the edge measure in the guessable sense; or: precisely what it means to respect both the measure- and graph-theoretic structure that constitute a graphing to begin with. Given that compactification realizes $\Gamma\subseteq \wh{\Gamma}$ \emph{fully} \cite[\S 2]{MR4110364} and full embeddings are measure-theoretically undetectable, the proposed interpretation of graphing symmetry is indeed blind to the process. In short: the choice of $\Aut(\Gamma)$ answers \Cref{item:intro.which.aut} affirmatively.

\Cref{item:intro.aut.cpct}, on the other hand, will fare differently: $\Aut(\Gamma)$'s definition affords sufficient scope to ensure examples exist that are fundamentally non-compact, in the sense that no (Hausdorff) compact topology is compatible with the group structure. Amalgamating portions of \Cref{th:erg.cyc.grphng,th:all.aut.t}, a sample statement reads

\begin{theoremN}\label{thn:aut.autpm}
  For every ergodic transformation $T\circlearrowright (X,\mu)$ on an infinite standard probability space both the centralizer $\Aut(T)$ and the extended centralizer
  \begin{equation*}
    \Aut^{\pm}(T)
    :=
    \left\{S\circlearrowright (X,\mu)\ :\ STS^{-1}=T^{\pm 1}\right\}
  \end{equation*}
  are realizable as $\Aut(\Gamma)$ for ergodic graphings $\Gamma$.
  
  In particular, it is possible for $\Aut(\Gamma)$ to carry no compatible compact-group structure (e.g. when it is countably infinite, which it can be). 
\end{theoremN}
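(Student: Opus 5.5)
The plan is to build $\Gamma$ on $(X,\mu)$ itself, starting from the orbit graph of $T$ and decorating it so that the measure-algebra automorphisms preserving the edge measure are exactly the elements of $\Aut^{\pm}(T)$, or exactly those of $\Aut(T)$. The base case is the \emph{cycle graphing} $\Gamma_T$ with edges $\{x,Tx\}$. Since $X$ is infinite (non-atomic) and $T$ is ergodic, $T$ is aperiodic almost everywhere, so almost every connected component is a bi-infinite line. The graph is $2$-regular, the edge measure is $\eta(A\times B)=\mu(A\cap T^{-1}B)+\mu(A\cap TB)$, and \eqref{eq:edge.meas} holds because $T$ preserves $\mu$. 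Ergodicity of $\Gamma_T$ is immediate, since $\Gamma_T$-invariant sets are exactly $T$-invariant sets.

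The first real step is to show $\Aut(\Gamma_T)=\Aut^{\pm}(T)$. The inclusion $\supseteq$ is clear: if $STS^{-1}=T^{\pm1}$, then $S$ carries the edge measure of $\Gamma_T$ to itself. For $\subseteq$, let $S$ be a measure-algebra automorphism preserving $\eta$. I will represent it by a pointwise measure-preserving bijection via the standard realization theorem for Lebesgue spaces. The key claim is that preserving $\eta$ forces $S$ to map edges to edges almost everywhere. I will derive this by disintegration: $\eta$ is carried by the graph of $T$ together with the graph of $T^{-1}$. Pushing forward by $S\times S$ and using uniqueness of disintegration against $\mu$, the pair $\{STx,ST^{-1}x\}$ must equal $\{TSx,T^{-1}Sx\}$ for almost every $x$. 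This yields a measurable partition $X=P\sqcup N$, where $STx=TSx$ on $P$ and $STx=T^{-1}Sx$ on $N$. Since $S$ maps each line component to a line component, its restriction to a component is a graph isomorphism between bi-infinite paths, hence a translation or a reflection. Therefore $P$ and $N$ are $T$-invariant, and by ergodicity one of them is null. So $S\in\Aut^{\pm}(T)$.

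To obtain $\Aut(T)$ alone, I need to break the reflection symmetry while keeping the graphing undirected and of bounded degree. I plan to use a Sabidussi-style gadget: replace each edge $x\to Tx$ by an asymmetric finite path or tree that records direction. Concretely, take the new vertex space to be $X\times\{0,1,\dots,k\}$ with the normalized product measure. Vertex $(x,0)$ is joined to $(x,1)$ and $(Tx,0)$, and a pendant path of some length hangs at $(x,1)$ so that the orientation is visible locally; for instance, attach a pendant vertex on the ``forward'' side only. Measure preservation again follows from invariance of $\mu$ under $T$. I then need to show every automorphism preserves the layer $X\times\{0\}$, which can be read off from degrees, and that its induced map on that layer preserves the orientation, so that it lies in $\Aut(T)$. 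Conversely, $\Aut(T)$ acts diagonally on the layers. Ergodicity is inherited from $T$.

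Finally, for the non-compactness assertion, I need an ergodic $T$ with $\Aut(T)$ countably infinite. For example, one can take a mixing rank-one transformation (Ornstein) or one with the minimal self-joinings property, for which $\Aut(T)=\{T^n\}\cong\bZ$. An infinite compact Hausdorff group has a Haar measure, which forces it to be uncountable by Baire category, since it has no isolated points. Hence $\bZ$ admits no compatible compact topology. The main obstacle I expect is the rigidity step: passing from the measure-algebra condition of preserving $\eta$ to the pointwise statement that $S$ maps edges to edges almost everywhere. This requires a careful disintegration argument, and in the gadget case it must also control automorphisms that might mix layers on sets of measure zero in a way consistent only up to null sets.
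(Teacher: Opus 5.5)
Your treatment of $\Aut^{\pm}(T)$ via the plain cycle graphing is correct and is essentially the paper's argument: ergodicity forces the set where $S$ preserves (resp.\ reverses) the orientation of $x\to Tx$ to be null or conull. Your disintegration step $\{STx,ST^{-1}x\}=\{TSx,T^{-1}Sx\}$ a.e.\ supplies the edges-to-edges claim that the paper leaves implicit.

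The genuine gap is in the $\Aut(T)$ part: your concrete gadget does not break the reflection symmetry. You keep the edge $\{(x,0),(Tx,0)\}$ and hang the path $(x,1),\dots,(x,k)$ at the single vertex $(x,0)$, so each component is a comb, and combs are reversible. Explicitly, suppose $STS^{-1}=T^{-1}$. Then $(x,j)\mapsto(Sx,j)$ is measure-preserving, sends $\{(x,0),(Tx,0)\}$ to $\{(Sx,0),(T^{-1}Sx,0)\}$ (again an edge), and maps teeth to teeth. So it preserves the edge measure, and the automorphism group of your graphing is still $\Aut^{\pm}(T)$. Your asserted step that the induced map on layer $0$ preserves orientation is therefore false. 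No decoration attached at the vertices of the orbit line, identical along the line, can see the direction.

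The decoration has to sit on the edges, and even then the periodic pattern read along each line must not be equivalent to its reverse. For instance, subdividing $\{x,Tx\}$ into the path $x,u_x,v_x,Tx$ with a single pendant $p_x$ at $u_x$ still fails. With $STS^{-1}=T^{-1}$, the map $x\mapsto v_{Sx}$, $v_x\mapsto Sx$, $u_x\mapsto u_{Sx}$, $p_x\mapsto p_{Sx}$ is a reflection about $u_x$ that swaps layers and preserves the edge measure.

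A working choice is to hang a leaf at $u_x$ and a pendant path of length $2$ at $v_x$, giving six equally weighted layers and degrees $\le 3$. The spine vertices then carry three distinct decoration types in the cyclic order (bare, leaf, $2$-path), which is not a rotation of its reverse. Hence every component automorphism preserves layers and direction. The paper's gadget does the same job: its two terminals are told apart by distance to the nearest leaf, and the leaf versus the length-$2$ path on its internal $4$-cycle blocks reversal.

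With such a gadget, your disintegration argument applies verbatim. It shows that an edge-measure-preserving map is a.e.\ a componentwise graph isomorphism. Such a map preserves these local invariants, keeps layer $0$, and takes forward neighbours to forward neighbours, i.e.\ it commutes with $T$.

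Finally, the non-compactness clause does not depend on this repair. If $\Aut(T)=\{T^n\}\cong\bZ$, then $\Aut(T)$ is the kernel of the orientation character on $\Aut^{\pm}(T)$. So $\Aut^{\pm}(T)$ is countably infinite and is already realized by your cycle graphing, which is how the paper argues.
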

The discussion thus also ramifies in two further directions:
\begin{itemize}[wide]
\item On the one hand, building on the cursory observation \cite[Example 5.6]{MR5115419} that \emph{locally equivalent} \cite[Definition 3.3]{hls_lim-grph} graphings can exhibit variable symmetry, we will see many instances of stronger equivalence (\emph{local-global}) with the same symmetry-non-preservation feature.

\item Secondly, \Cref{thn:aut.autpm} is suggestive of prescribed-symmetry considerations, in the spirit of Sabidussi's realization \cite{sabid-props,sab-inf} of arbitrary graph-automorphism groups and the many cognates now available \cite{pt_comb-rep_1980} in the literature.
\end{itemize}


\section{Ergodic graphings and symmetries non-amenable to compactification}\label{se:ncpct}

Per \cite[Definition 5.2]{MR5115419}, an automorphism of a graphing $\Gamma=(V,\cB,E,\mu)$ is one of the \emph{measure algebra} \cite[\S 1.4C]{pet_erg_1989} (associated to) $(\cB,\mu)$ preserving the edge measure $\eta_{\Gamma}$. 

\begin{remarks}\label{res:sp.mor.2.alg.mor}
  \begin{enumerate}[(1),wide]
  \item The broader but compatible notion of \emph{iso}morphism is easily guessable: one of measure algebras, whose Cartesian square pushes one edge measure forward onto the other.  
    
  \item In the present standard measure-theoretic setting measure-algebra automorphisms are induced by measure-\emph{space} automorphisms (identified modulo measure 0, as contextually appropriate) by \cite[\S 1.4, Theorem 4.7]{pet_erg_1989}; cf. also \cite[Remark 2.3]{MR5115419}.
  \end{enumerate}  
\end{remarks}

\cite[Definition 3.3]{hls_lim-grph} introduces two notions of mutual resemblance for graphings:
\begin{itemize}[wide]
\item \emph{local (or L-)equivalence} $\Gamma_1\sim \Gamma_2$ (also \cite[\S 18.5]{lvsz_lrg-net_2012}), meaning that the probability distributions of
  \begin{equation*}
    N_{\Gamma_1,r}(x_1)
    :=
    \left(\text{$r$-neighborhood of $x_1\in V_1$}\right)
    \quad\text{and}\quad
    N_{\Gamma_2,r}(x_2)
    ,\quad
    x_i\in V_i\text{ $\mu_i$-random}
  \end{equation*}
  on the space of \emph{rooted graphs} (graphs with a distinguished vertex) of radius $\le r$ agree for all $r\in \bZ_{\ge 0}$;
  
\item and \emph{local-global (or LG-)equivalence} $\Gamma_1\approx\Gamma_2$ \cite[\S 19.2.1]{lvsz_lrg-net_2012}, a refinement of the former which takes into account graph colorings. 
\end{itemize}

\cite[Example 5.6]{MR5115419} shows that local equivalence need not entail isomorphic automorphism groups. It is perhaps worth pointing out that nor does local-global equivalence.

\begin{example}\label{ex:cyc.grphng}
  Recall the \emph{cyclic graphing} $\mathbf{C}_a$ on $\left(\bS^1\cong \bR/\bZ,\mu:=\text{Lebesgue measure}\right)$ introduced in \cite[Example 18.1]{lvsz_lrg-net_2012}: two vertices $z_1,z_2\in \bR/\bZ$ are connected precisely when they are images of reals $a>0$ apart. \Cref{th:erg.cyc.grphng} shows that for $\mathbf{C}_a$, $a\in \left(\bR\setminus \bQ\right)_{>0}$ is LG-equivalent to graphings with distinct automorphism groups (abstractly, as plain groups, disregarding any topological structure).
\end{example}

\begin{remark}\label{re:rat.vs.irrat.cyc}
  One qualitative distinction between rational and irrational cyclic graphings (rationality meaning $a\in \bQ$) is their ``tameness'' as Borel equivalence relations: rational $\mathbf{C}_a$, regarded as binary relations, are \emph{smooth} in the sense of \cite[\S 6]{km_orb_2004}, and in fact have Borel \emph{transversals} (i.e. \cite[pre Proposition 6.4]{km_orb_2004} Borel subsets in $\bS^1$ intersecting every graph connected component exactly once): a Borel section of the bundle $\bS^1\ni z\mapsto z^q\in \bS^1$ for the lowest-terms expression $a=\frac{p}{q}\in \bQ$ will provide such a transversal.

  On the other hand, \cite[Remark 4.8]{MR3754081} notes that irrational $\mathbf{C}_a$ cannot have measurable transversals. This also follows from \cite[Theorem]{MR947676}, ruling out measurable \emph{perfect matchings} in the irrational case: a measurable transversal $S\subseteq \bS^1$ would provide a measurable perfect matching by connecting every $s\in S$ with its $\mathbf{C}_a$-neighbor moving counterclockwise.
\end{remark}

\Cref{ex:cyc.grphng}, in light of passing remarks in \cite[\S 13]{hls_lim-grph}, already suffices to provide examples of LG-equivalent graphings with distinct automorphism groups: 

\begin{example}\label{ex:3.vrsn.ca}
  \cite[p.294]{hls_lim-grph} distinguishes between $\mathbf{C}_a$ and the following two variants:
  \begin{enumerate}[(a),wide]
  \item\label{item:ex:3.vrsn.ca:cvx} $\mathbf{C}'_a$ simply doubles $\mathbf{C}_a$ with each copy carrying total mass $\frac 12$;

  \item\label{item:ex:3.vrsn.ca:skw.prod} while $\mathbf{C}''_a$ consists of two mass-$\frac 12$ copies of the unit interval, with the edges connecting points on said two copies.
  \end{enumerate}
  These are both LG-equivalent to $\mathbf{C}_a$, as noted in the same cited discussion (which refers to all as realizations of the local-global limit of finite cyclic graphs). 
\end{example}

\begin{remark}\label{re:cvx.comb.grphing}
  \Cref{ex:3.vrsn.ca}\Cref{item:ex:3.vrsn.ca:cvx} generalizes to a graphing analogue of the familiar \cite[\S 10(F)]{kchrs_glob_2010} \emph{convex combination} of dynamical systems:
  \begin{equation*}
    \bigoplus_{i=1}^k \lambda_i \Gamma_i
    ,\quad
    \sum_i \lambda_i=1,\ \lambda_i\in [0,1]
  \end{equation*}
  for graphings $\Gamma_i=(V_i,\cB_i,E_i,\mu)$ is simply the disjoint union of the underlying graphs, with the measure spaces weighted by $\lambda_i$ respectively. 
\end{remark}

\Cref{ex:3.vrsn.ca} extends to broader classes of graphings that provide ample instances of LG-equivalence with large symmetry disagreement. Building on $\mathbf{C}_a$, denote by $\mathbf{C}_T$ the graphing on a standard probability space $(X,\mu)$ equipped with a $\mu$-preserving $\bZ$-action generated by $T$, with $T\ne T^{-1}$ almost everywhere ($\mathbf{C}_T$ might be termed the \emph{Cayley graphing} of the action, compatibly with \cite[\S 2]{MR3009109}, say). We will be especially interested in ergodic $T$. 

\begin{example}\label{ex:cpct.gp}
  One particular family of interest, recovering $\mathbb{C}_a$ when $\bG:=\bS^1$, would be that of graphings $\bG_a$ on compact groups $\bG$ connecting $g\in \bG$ to $ag$ for non-involutive $a\in \bG$. $\bG$ will always be assumed equipped with its Haar probability measure $\mu=\mu_{\bG}$ and, in the context of graphings, also metrizable so as to ensure that $(\bG,\mu)$ is standard.

  The translation $a\cdot$ is ergodic precisely \cite[Example 3.40.3]{gls_erg-join_2003} when $a$ generates $\bG$ topologically (so that $\bG$ is abelian).
\end{example}

\begin{theorem}\label{th:erg.cyc.grphng}
  \begin{enumerate}[(1),wide]
  \item\label{item:th:erg.cyc.grphng:gen.erg} All convex combinations
    \begin{equation*}
      \mathbf{C}_T^{\oplus \left(\lambda_i\right)_i}
      :=
      \bigoplus_{i=1}^k \lambda_i \mathbf{C}_T
      ,\quad
      \sum_i \lambda_i=1,\ \lambda_i>0
    \end{equation*}
    for ergodic $T\circlearrowright (X,\mu)$ on infinite standard probability spaces LG-equivalent, and
    \begin{equation}\label{eq:det.erg}
      \begin{aligned}
        \Aut\left(
        \mathbf{C}_T^{\oplus \left(\lambda_i\right)_i}
        \right)
        &\cong
          \Aut^{\pm}(T)\wr \Aut\left(\lambda_i\right)_{1\le i\le k}\\
        \Aut^{\pm}(T)
        &:=
          \left\{S\in \Aut(\mu)\ :\ \Ad_ST=T^{\pm 1}\right\},
      \end{aligned}
    \end{equation}
    ``$\wr$'' denoting the \emph{wreath product} \cite[p.172]{rot-gp} with the permutation automorphism group $\Aut(\lambda_i,\ 1\le i\le k)$ of the tuple $(\lambda_i)$.
    
  \item\label{item:th:erg.cyc.grphng:det.gp} In particular, all convex combinations
    \begin{equation*}
      \bG_a^{\oplus \left(\lambda_i\right)_i}
      :=
      \bigoplus_{i=1}^k \lambda_i \bG_a
      ,\quad
      \sum_i \lambda_i=1,\ \lambda_i>0
    \end{equation*}
    for topological generators $a\in \bG$ in infinite metric compact groups are mutually LG-equivalent, and
    \begin{equation}\label{eq:det.gp}
      \Aut\left(
        \bG_a^{\oplus \left(\lambda_i\right)_i}
      \right)
      \cong
      \left(\bG\rtimes \bZ/2\right)\wr \Aut\left(\lambda_i\right)_{1\le i\le k}
    \end{equation}
    with $\bZ/2$ acting as a reflection.

  \item\label{item:th:erg.cyc.grphng:non.iso} The automorphism groups in \Cref{item:th:erg.cyc.grphng:det.gp} are thus non-isomorphic whenever their identity components $\bG_0^k$ are. This happens, for instance, for distinct $k$ and a fixed finite-dimensional torus $\bG$.
  \end{enumerate}
\end{theorem}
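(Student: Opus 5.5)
We handle the three parts in order; parts (2) and (3) will follow from (1) once we compute $\Aut^{\pm}$ for rotations.

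\textbf{LG-equivalence.} The rooted $r$-neighbourhoods and their colorings are computed component by component. So it suffices to compare each $\bigoplus_i\lambda_i\mathbf{C}_T$ with $\mathbf{C}_T$ itself. Since $T$ is ergodic on a non-atomic space, it is aperiodic a.e., so every component of $\mathbf{C}_T$ is a bi-infinite path. This gives local equivalence immediately. For LG-equivalence we will use that every graphing whose components are a.e.\ bi-infinite paths is a hyperfinite $2$-regular graphing. For such graphings, the local-global data (joint distributions of neighbourhoods with measurable $k$-colorings) can be approximated within $\varepsilon$ by Rokhlin-tower arguments: long finite path segments can be cut and recolored. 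Concretely, a coloring of $\lambda_i\mathbf{C}_T$ pieces can be transported onto $\mathbf{C}_T$ by partitioning $X$ into sets of measure $\lambda_i$ that are unions of long tower columns, up to $\varepsilon$ boundary error. Conversely, a coloring of $\mathbf{C}_T$ is restricted to each copy. This is the route of \cite[\S 13]{hls_lim-grph} for $\mathbf{C}_a$, and we transplant it verbatim using the Rokhlin lemma for aperiodic $T$.

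\textbf{Automorphism group.} Fix $\Phi\in\Aut\bigl(\bigoplus\lambda_i\mathbf{C}_T\bigr)$, realized pointwise by \Cref{res:sp.mor.2.alg.mor}.
\begin{enumerate}[(a),wide]
\item Preserving the edge measure forces $\Phi$ to map graph components to graph components a.e. The key claim is that $\Phi$ then maps each copy $X_i$ onto some $X_{\sigma(i)}$. To see this, note that $\Phi^{-1}(X_j)\cap X_i$ is a.e.\ $T$-invariant in $X_i$, hence null or conull by ergodicity. Measure preservation then forces $\lambda_{\sigma(i)}=\lambda_i$.
\item On each copy, $\Phi$ restricts to a measure-preserving $S$ that maps each path component to a path component as a graph isomorphism. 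Hence $S T S^{-1}x\in\{Tx,T^{-1}x\}$ a.e.
\item The set where $STS^{-1}=T$ is invariant under $T$, since $S$ is a graph isomorphism on each orbit and so preserves orientation consistently along it. By ergodicity this set is null or conull, so $S\in\Aut^{\pm}(T)$.
\item Conversely, any tuple in $\Aut^{\pm}(T)$ together with a permutation $\sigma$ preserving the $\lambda_i$ clearly preserves the edge measure.
\end{enumerate}
Assembling these gives the wreath product. The main obstacle is (b)–(c): passing from a merely measure-algebra automorphism preserving $\eta$ to pointwise edge preservation a.e. To settle it, we will show that $\eta\bigl((\Phi\times\Phi)E\setminus E\bigr)=0$. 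This follows by comparing $\eta$-masses of $A\times B$ rectangles generating the product $\sigma$-algebra. Since $\eta$ is supported on the edge set, which is the union of the graphs of $T$ and $T^{-1}$, the image edge set agrees with $E$ off a null set.

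\textbf{Parts (2) and (3).} For rotations by a topological generator $a$, the maps commuting with $a\cdot$ are exactly the translations by $\bG$. The maps conjugating $a\cdot$ to $a^{-1}\cdot$ are $g\mapsto h g^{-1}$. To prove this, we use the fact that the centralizer of an ergodic rotation consists of translations: it is a discrete-spectrum system, so the eigenfunctions (characters) are permuted, and the Halmos–von Neumann argument applies. Since inversion is available because $\bG$ is abelian, $\Aut^{\pm}=\bG\rtimes\bZ/2$. For (3), the identity component of the wreath product is $\bG_0^k$, because $\Aut(\lambda_i)$ and $\bZ/2$ are finite. For $\bG=\bT^d$ this identity component is $\bT^{dk}$, whose dimension distinguishes different $k$.
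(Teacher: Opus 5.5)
\textbf{Part (3).} This is where the proposal has a genuine gap. The isomorphism \Cref{eq:det.gp} is one of abstract groups. $\Aut(\Gamma)$ is just a group of measure-algebra automorphisms, and the point (cf.\ \Cref{ex:cyc.grphng}) is non-isomorphism as plain groups. So ``identity component'' and ``dimension'' are not invariants you may use. An abstract isomorphism between two of these wreath products has no a priori reason to carry $\bG_0^k$ onto $\bG_0^{k'}$. Dimension is not an abstract-group invariant either: for the monothetic solenoid $\bG=\widehat{\bQ}$, both $\bG$ and $\bG^2$ are abstractly $\bQ^{(\mathfrak c)}$. A topological reading would not rescue the argument as written, since you would still have to fix a topology on $\Aut(\Gamma)$ and check that \Cref{eq:det.gp} is a homeomorphism.

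The missing idea is an algebraic description of $\bG_0^k$. It is a connected compact abelian group, hence divisible, hence killed by every homomorphism to a finite group. The quotient by it, $\bigl((\bG/\bG_0)\rtimes\bZ/2\bigr)\wr\Aut(\lambda_i)$, is profinite, hence residually finite as an abstract group. So $\bG_0^k$ is exactly the joint kernel of all homomorphisms to finite groups, and any abstract isomorphism must respect it. For $\bG\cong\bT^d$ your conclusion is true, but the reason is torsion, not dimension: $\bT^{dk}[p]\cong(\bZ/p)^{dk}$, i.e.\ the $p$-primary torsion is a sum of $dk$ copies of $\bZ/p^{\infty}$.

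\textbf{LG-equivalence.} As described, your transport step does not work. Suppose $Y_i$ is a union of tower columns of measure $\lambda_i$ and you color it by $c_i|_{Y_i}$. You then get the colored-neighborhood statistics of $c_i$ on $Y_i$, which need not be $\lambda_i$ times those of $c_i$ on $X$. The columns can be correlated with $c_i$, so boundary error is not the only error.

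There are two ways to fix this:
\begin{itemize}
\item choose the blocks to be simultaneously statistically typical for all $c_i$, using the pointwise ergodic theorem for the finitely many pattern indicators and a global block offset chosen so that most block starting points are typical; or
\item do what the paper does and invoke the theorem that locally equivalent hyperfinite graphings are LG-equivalent, hyperfiniteness being determined by the local-equivalence class.
\end{itemize}

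The rest of your argument is the paper's, with more detail: the copies are the ergodic components and are permuted with weights preserved, the orientation set is $T$-invariant and hence null or conull, and you pass from $\eta$-preservation to pointwise edge preservation. Your Halmos--von Neumann eigenfunction argument for $\Aut(a\cdot)=\bG$ is a valid alternative to the paper's more elementary observation that $s\mapsto s^{-1}\varphi(s)$ is $(a\cdot)$-invariant and therefore a.e.\ constant.
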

\begin{proof}
  \begin{enumerate}[label={},wide]
  \item \textbf{\Cref{item:th:erg.cyc.grphng:det.gp} $\Rightarrow$ \Cref{item:th:erg.cyc.grphng:non.iso}:} Assume \Cref{eq:det.gp}. Because $\bG_0^k\trianglelefteq \Aut$ is abelian and \emph{divisible} \cite[Corollary 8.5]{hm5} and hence annihilated by any finite-image morphism out of $\Aut$, it can be recovered as precisely the joint kernel of all morphisms from $\Aut$ into finite groups. $k$ does indeed distinguish between the abstract isomorphism classes of $\bG^k$ for a finite-dimensional torus $\bG\cong \bT^d$: for any prime $p$ the \emph{$p$-primary component} \cite[p.5]{kap} of the torsion $t\left(\bG^k\right)$ is a sum of precisely $kd$ indecomposable summands $\bZ/p^{\infty}:=$ $p$-power-order roots of unity.

  \item \textbf{\Cref{item:th:erg.cyc.grphng:gen.erg} $\Rightarrow$ \Cref{item:th:erg.cyc.grphng:det.gp}:} This is specialization to ergodic translation actions $T:=a\cdot$, noting that
    \begin{itemize}[wide]
    \item in this case $T\cong T^{-1}$ via the inversion map, so that $\Aut^{\pm}(T)=\Aut(T)\rtimes \bZ/2$ with $\bZ/2$ generated by that map;

    \item and the centralizer $\Aut(a\cdot )$ on a compact metrizable group consists precisely \cite[Proposition 3.14]{MR2186251} of the translations and is thus identifiable with $\bG$.
    \end{itemize}
    Loc. cit. does not provide the short proof for the latter claim; for completeness, observe that for an $(a\cdot)$-centralizing automorphism $\varphi$ the sets
    \begin{equation*}
      \bG_{\Theta}
      :=
      \left\{s\in \bG\ :\ s^{-1}\cdot \varphi s \in \Theta\right\}
      ,\quad
      \text{Borel }\Theta\subseteq \bG
    \end{equation*}
    are invariant under the ergodic transformation $a\cdot$, so are all either null or full with respect to the Haar measure.    

  \item \textbf{\Cref{item:th:erg.cyc.grphng:gen.erg}:} The mutual LG-equivalence claim follows via \cite[Theorem 21.16]{lvsz_lrg-net_2012} from
    \begin{itemize}[wide]
    \item mutual \emph{local} equivalence, given that the probability measures attached \cite[post Definition 3.1]{hls_lim-grph} to all $\mathbf{C}_T^{\oplus(\lambda_i)}$ (regardless of $T$ or the $\lambda_i$, under the hypotheses) all coincide with that concentrated on the rooted bi-infinite path;
    \item together with \emph{hyperfiniteness}, again valid for all because only dependent \cite[Proposition 10.2]{hls_lim-grph} on the local equivalence class.
    \end{itemize}
    As to \Cref{eq:det.erg}, observe first that the individual weighted copies of $\mathbf{C}_T$ constituting $\mathbf{C}_T^{\oplus \left(\lambda_i\right)_i}$ are the components of the \emph{ergodic decomposition} \cite[\S 10(F)]{kchrs_glob_2010} of that graphing, so automorphisms must permute them while preserving the weights. This much already provides
    \begin{equation*}
      \Aut\left(
        \mathbf{C}_T^{\oplus \left(\lambda_i\right)_i}
      \right)
      \cong
      \Aut(\mathbf{C}_T)\wr \Aut\left(\lambda_i\right)_{1\le i\le k},
    \end{equation*}
    reducing the problem to $k=1$. It remains to argue that $\Aut(\mathbf{C}_T)\cong \Aut^{\pm}(T)$.

    Indeed, because the equivalence relation $E$ underlying $\mathbf{C}_T$ is ergodic, for any specific $\varphi\in \Aut(\mathbf{C}_T)$ one of the sets
    \begin{equation*}
      \left\{x\in X\ :\ \varphi\text{ fixes/reverses the orientation of the edge }x\to Tx\right\} 
    \end{equation*}
    must be either full or null and hence $\varphi$ either respects or reverses the orientation of \emph{every} orbit. That condition, though, translates precisely to $\varphi\in \Aut^{\pm}(T)$ as defined in the statement.
  \end{enumerate}
\end{proof}

\begin{remark}\label{re:rec.prod}
  \Cref{th:erg.cyc.grphng} covers the case of \Cref{ex:3.vrsn.ca}'s $\mathbf{C}_a''$: the latter can be identified with $\bG_g$ for $\bG:=\bS^1\times \left(\bZ/2:=\left\{1,\sigma\right\}\right)$ and $g:=\left(e^{2\pi i a},\sigma\right)$.
\end{remark}

This suffices to answer negatively (for the present notion of graphing automorphism) the last question posed in \cite[\S 5]{MR4110364}.

\begin{corollary}\label{cor:all.rvrs.gp.realiz}
  All \emph{reversing symmetry groups} \cite[\S 2, (2)]{MR2230652} $\Aut^{\pm}(T)$ for ergodic $T$ on infinite standard probability spaces are realizable as $\Aut(\Gamma)$ for $\Gamma\approx \mathbf{C}_{a\in \bR\setminus \bQ}$.

  In particular, there are graphings $\Gamma$ with $\Aut(\Gamma)$ not carrying any compatible (Hausdorff) compact-group topology. 
\end{corollary}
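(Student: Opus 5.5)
The first claim follows from \Cref{th:erg.cyc.grphng}\Cref{item:th:erg.cyc.grphng:gen.erg} with $k=1$. Given ergodic $T\circlearrowright(X,\mu)$ on an infinite standard probability space, take $\Gamma:=\mathbf{C}_T$. By \Cref{eq:det.erg} with a single weight $\lambda_1=1$ (so the wreath factor $\Aut(\lambda_1)$ is trivial), we get $\Aut(\Gamma)\cong\Aut^{\pm}(T)$.

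We also need $T\ne T^{-1}$ almost everywhere, so that $\mathbf{C}_T$ is a genuine loopless bounded-degree graphing. This holds because ergodicity on a non-atomic (infinite) space forces aperiodicity. The set $\{x: T^2x=x\}$ is $T$-invariant, hence null or full. If it were full, then $T^2=\mathrm{id}$ and every orbit would be finite; an ergodic transformation with almost all orbits finite is concentrated on a single orbit, which contradicts infiniteness of the space.

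For LG-equivalence with $\mathbf{C}_a$, $a\in\bR\setminus\bQ$, we again invoke the proof of \Cref{th:erg.cyc.grphng}\Cref{item:th:erg.cyc.grphng:gen.erg}. Both $\mathbf{C}_T$ and $\mathbf{C}_a=\mathbf{C}_{T_a}$ (with $T_a$ the irrational rotation, which is ergodic) have local statistics concentrated on the rooted bi-infinite path, because aperiodicity makes every orbit a bi-infinite line. Hence they are locally equivalent. Both are also hyperfinite, so \cite[Theorem 21.16]{lvsz_lrg-net_2012} upgrades local equivalence to LG-equivalence.

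For the second claim it suffices to exhibit an ergodic $T$ whose $\Aut^{\pm}(T)$ is countably infinite. An infinite compact Hausdorff group is uncountable: it carries a Haar measure that is non-atomic once the group is infinite, and a countable group would be a countable union of translates of a null singleton. So a countably infinite group admits no compatible compact-group topology.

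The main obstacle is citing or constructing such a $T$. The first choice is a mixing rank-one transformation with trivial centralizer, i.e.\ one with minimal self-joinings in Rudolph's sense, such as Ornstein's mixing rank-one examples or Chacón's transformation. For these $\Aut(T)=\{T^n\}_{n}\cong\bZ$. Then $\Aut^{\pm}(T)$ is either $\bZ$ or an extension of $\bZ/2$ by $\bZ$, and in either case it is countably infinite. Countability of $\Aut^{\pm}(T)$ follows because $\Aut(T)$ has index at most $2$ in it: two reversing automorphisms differ by an element of $\Aut(T)$.
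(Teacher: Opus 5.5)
Your proposal is correct and follows essentially the paper's route: the first claim is \Cref{th:erg.cyc.grphng} with $k=1$, and the second uses a mixing transformation with $\Aut(T)=\{T^n\}_{n\in\bZ}$, the index-$\le 2$ observation, and the fact that infinite compact Hausdorff groups are uncountable (you get this from Haar measure, the paper from perfectness; either works). One small quibble: Chac\'on's transformation is weakly mixing but not mixing, and minimal self-joinings is strictly stronger than trivial centralizer, so your ``i.e.'' is inaccurate; this is harmless, since only ergodicity and $\Aut(T)\cong\bZ$ are used.
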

\begin{proof}
  The first statement is self-evident; for the second, recall \cite[\S 2]{MR399416}'s construction of a \emph{mixing} \cite[Definition 3.46]{gls_erg-join_2003} (so also ergodic) transformation $T$ whose centralizer consists precisely of its integer powers. This makes the reversing group $\Aut^{\pm}(T)$ countably infinite, for it contains $\Aut(T)\cong \bZ$ with index $\le 2$. Being \emph{perfect}, infinite compact Hausdorff groups cannot have sub-continuum cardinality \cite[Problem 30B]{wil_top}.
\end{proof}

\Cref{th:erg.cyc.grphng} in particular realizes all $\Aut^{\pm}(T)$ (for ergodic $T$ on infinite standard spaces) as automorphism groups of 2-regular graphings. Recall that in fact ``most'' ergodic $T$ are non-isomorphic to their respective inverses, so that $\Aut^{\pm}(T)=\Aut(T)$: \cite[Theorem 3]{MR633762} proves the set of such $T\circlearrowright [0,1]$ a dense $G_{\delta}$ set in the \emph{weak topology} of \cite[pp. 61-64]{halm_erg_1960} on $\Aut(\mu)$. We can improve on this somewhat, much in the spirit of many such prescribed-symmetry results recovering pre-specified groups as structure automorphism groups.

\begin{theorem}\label{th:all.aut.t}
  For every ergodic $T$ on an infinite standard Borel probability measure, $\Aut(T)\cong \Aut(\Gamma)$ for an ergodic graphing $\Gamma$ with vertex degrees $\le 3$. 
\end{theorem}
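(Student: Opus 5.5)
The plan is to decorate the Cayley graphing $\mathbf{C}_T$ so that orientation-reversing symmetries are no longer available, while keeping the degree at most $3$ and the graphing ergodic. Take $V := X\times\{0,1\}$ with measure $\frac12\mu\otimes(\text{counting})$, and use edges of two types. The \emph{spine} edges are $(x,0)\sim(Tx,0)$. The \emph{asymmetric gadget} edges are $(x,0)\sim(x,1)$ and $(x,1)\sim(Tx,0)$. Thus each spine vertex $(x,0)$ has neighbours $(T^{\pm1}x,0)$, $(x,1)$ and $(T^{-1}x,1)$, so it has degree $4$. That is too many, so I would instead use the standard trick of distinguishing directions by path lengths.

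Concretely, replace each directed step $x\to Tx$ by a path of length $2$: $(x,0)\sim(x,1)\sim(Tx,0)$. Then attach a pendant vertex $(x,2)$ to $(x,1)$ only when we want to mark the direction. A cleaner marking is to subdivide each edge $x\to Tx$ into a path $(x,0)-(x,1)-(x,2)-(Tx,0)$ and hang a leaf $(x,3)$ on $(x,1)$, the midpoint nearer to the tail. With this choice $V=X\times\{0,1,2,3\}$ carries the measure $\frac14\mu$ on each layer. The degrees are: $2$ on $(x,0)$, $3$ on $(x,1)$, $2$ on $(x,2)$ and $1$ on $(x,3)$. Condition \cref{eq:edge.meas} holds because every edge is given by a measure-preserving partial bijection between layers. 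Ergodicity follows because each connected component is the union of the layer fibres over a single $T$-orbit, so invariant sets are of the form $A\times\{0,1,2,3\}$ with $A$ $T$-invariant.

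Next, show that $\Aut(\Gamma)\cong\Aut(T)$. In one direction, each $S\in\Aut(T)$ acts as $S\times\mathrm{id}$ and evidently preserves the edge measure. In the other direction, let $\varphi$ be a measure-algebra automorphism preserving $\eta_\Gamma$, realized pointwise by a measure-space automorphism via \Cref{res:sp.mor.2.alg.mor}. The key step, and the expected main obstacle, is to show that $\varphi$ maps edges to edges pointwise almost everywhere and not just in measure. This requires showing that preserving $\eta_\Gamma$ forces $\varphi$ to preserve the degree function, and then to preserve the local graph structure almost everywhere. I would argue as follows. First, $\eta(A\times V)=\int_A\deg\,d\mu$, applied to all $A$, forces $\deg\circ\varphi=\deg$ almost everywhere. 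Second, for a countable generating family of Borel sets $A,B$, the equality $\sharp(\varphi x\to\varphi B)=\sharp(x\to B)$ holds almost everywhere. Indeed, both sides integrate to the same quantity over every $A$, since $\eta(\varphi A\times\varphi B)=\eta(A\times B)$. Given bounded degree, a standard separation argument then shows that the neighbour set of $\varphi x$ is $\varphi$ of the neighbour set of $x$ for almost every $x$. This is the same local rigidity implicitly used in the proof of \Cref{th:erg.cyc.grphng}, where ``$\varphi$ fixes/reverses the orientation of the edge $x\to Tx$'' is treated as a pointwise statement.

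Once we have pointwise graph preservation on a conull invariant set, the conclusion is short. The degree-$1$, degree-$2$ and degree-$3$ vertices are preserved, so the layers $X\times\{3\}$ and $X\times\{1\}$ are preserved. The layer-$0$ vertices are distinguished among degree-$2$ vertices by not being adjacent to a degree-$3$ vertex on both sides. More precisely, $(x,2)$ is adjacent to $(x,1)$, which has degree $3$, while $(x,0)$ is adjacent to $(x,1)$ and to $(T^{-1}x,2)$. To separate them, note that $(x,0)$ is at distance $1$ from a degree-$3$ vertex and at distance $2$ from a leaf, and $(x,2)$ is as well. However, the degree-$3$ vertex $(x,1)$ has its two non-leaf neighbours $(x,0)$ and $(x,2)$ in different roles: following the path away from $(x,0)$, the distance to the next degree-$3$ vertex is the same in both directions. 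So instead the layers must be made distinguishable by subdividing asymmetrically, for instance by putting two subdivision vertices after the tagged vertex, i.e. using the path $(x,0)-(x,1)-(x,2)-(x,4)-(Tx,0)$. With this choice the neighbours of each degree-$3$ vertex lie at distances $1$ and $3$ along the path from the next layer-$0$-type configuration, and no reflection fixes the pattern.

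After fixing such a gadget, $\varphi$ preserves each layer and preserves the orientation along every orbit. Its restriction $S$ to $X\times\{0\}$ is therefore a measure-preserving map with $S T = T S$, and $\varphi=S\times\mathrm{id}$ is determined by $S$. This gives $\Aut(\Gamma)\cong\Aut(T)$. The care needed is in choosing a gadget whose path pattern has no reflection symmetry, and in the almost-everywhere pointwise rigidity step.
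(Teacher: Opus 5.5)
Your overall plan matches the paper's: replace each edge $x\to Tx$ by a gadget with no orientation-reversing symmetry, then read off $\Aut(T)$ from preservation of layers and orientation. The steps you worry about are fine: edge-measure symmetry, ergodicity, and a.e.\ pointwise rigidity (which follows from $\sharp(\varphi x\to\varphi B)=\sharp(x\to B)$ a.e.\ over a countable generating algebra of sets $B$). The gap is that your final gadget is \emph{not} asymmetric, so your assertion that ``no reflection fixes the pattern'' is false. With the path $(x,0)-(x,1)-(x,2)-(x,4)-(Tx,0)$ and a single leaf $(x,3)$ on $(x,1)$, each orbit becomes a bi-infinite path with a pendant leaf at the positions $4\bZ+1$. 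The reflection $t\mapsto 2-t$ preserves $4\bZ+1$. More generally, a single kind of marker repeated once per period is symmetric about each marker, whatever the subdivision lengths. This is also why $(x,0)$ and $(x,2)$, both adjacent to the degree-$3$ vertex $(x,1)$ and to a degree-$2$ vertex, cannot be told apart. Explicitly, if $RTR^{-1}=T^{-1}$, consider
\begin{equation*}
  (y,0)\mapsto (Ry,2),\quad (y,2)\mapsto (Ry,0),\quad (y,1)\mapsto (Ry,1),\quad (y,3)\mapsto (Ry,3),\quad (y,4)\mapsto (RTy,4).
\end{equation*}
This map sends edges to edges (the edge $(y,2)-(y,4)$ goes to $(Ry,0)-(RTy,4)$, an edge because $TRT=R$). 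It preserves the measure, yet it swaps layers $0$ and $2$, so it is not of the form $S\times\mathrm{id}$. By the argument in the proof of \Cref{th:erg.cyc.grphng}, your $\Gamma$ therefore has $\Aut(\Gamma)\cong\Aut^{\pm}(T)$, not $\Aut(T)$. For an irrational rotation of $\bS^1$ these differ: $\bS^1\rtimes\bZ/2$ is non-abelian, whereas $\Aut(T)\cong\bS^1$ is abelian.

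The missing ingredient is two \emph{distinguishable} decorations at positions that are not mirror images of each other. For instance, on the path $(x,0)-(x,1)-(x,2)-(Tx,0)$, hang a leaf on $(x,1)$ and a pendant path of length $2$ on $(x,2)$; all degrees stay $\le 3$. The two marker types then sit at $3\bZ+1$ and $3\bZ+2$. A reflection $t\mapsto c-t$ preserving both would need $c\equiv 2$ and $c\equiv 1 \pmod 3$ simultaneously, which is impossible. This is exactly what the paper's gadget \Cref{eq:grph.subst} achieves. On its $4$-cycle, the two vertices that a reflection swapping $x$ and $Tx$ would interchange carry a pendant leaf and a pendant path of length $2$ respectively, so orientation is forced. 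With such a gadget in place, the rest of your argument goes through.
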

\begin{proof}
  Starting with Cayley graphing $\mathbf{C}_T$ based on $T\circlearrowright (X,\mu)$, we will replace edges by more rigid gadgets that ensure orientation preservation; the device is a measure-enhanced counterpart of sorts to the combinatorial \emph{arrow construction} employed \cite[\S IV.2]{pt_comb-rep_1980} in controlling how much symmetry one breaks when constructing objects with prescribed automorphism groups.

  For every edge $x\to Tx$ substitute the graph
  \begin{equation}\label{eq:grph.subst}
    \begin{tikzpicture}[>=stealth,auto,baseline=(current  bounding  box.center)]
      \path[anchor=base] 
      (0,0) node (l) {$\bullet$}
      +(2,.5) node (u) {$\bullet$}
      +(3.8,.5) node (r) {$\bullet$}
      +(-1,0) node (ll) {$x$}
      +(6,0) node (rr) {$Tx$}
      +(5,1) node (e) {$\bullet$}
      +(5,0) node (rd) {$\bullet$}
      +(3,2) node (ee) {$\bullet$}
      +(3.5,1.5) node (leaf2) {$\bullet$}
      ;
      \draw[-] (ll) to[bend left=0] node[pos=.5,auto] {$\scriptstyle $} (l);
      \draw[-] (l) to[bend left=6] node[pos=.5,auto] {$\scriptstyle $} (u);
      \draw[-] (u) to[bend left=6] node[pos=.5,auto] {$\scriptstyle $} (r);
      \draw[-] (l) to[bend right=6] node[pos=.5,auto,swap] {$\scriptstyle $} (rd);
      \draw[-] (rd) to[bend right=0] node[pos=.5,auto,swap] {$\scriptstyle $} (rr);
      \draw[-] (r) to[bend right=6] node[pos=.5,auto,swap] {$\scriptstyle $} (e);
      \draw[-] (r) to[bend right=6] node[pos=.5,auto,swap] {$\scriptstyle $} (rd);
      \draw[-] (u) to[bend left=6] node[pos=.5,auto,swap] {$\scriptstyle $} (ee);
      \draw[-] (e) to[bend left=6] node[pos=.5,auto,swap] {$\scriptstyle $} (leaf2);
    \end{tikzpicture}
  \end{equation}
  The underlying probability space carrying the graphing structure is the equal-weight convex combination (in the sense of \Cref{re:cvx.comb.grphing}) of the original $\left(X_0:=X,\mu\right)$ with one copy $X_{\bullet}$ carrying all instances of the respective node $\bullet$ in \Cref{eq:grph.subst}.

  The points on the various $X_{*}$ for varying $*\in \left\{*\right\}:=\{0\}\sqcup \{\bullet\in\text{\Cref{eq:grph.subst}}\}$ are distinguished by various combinatorial properties: degree sequences (i.e. common degrees of all points in $X_{*}$, the degrees of each point's neighbors and so on iteratively), distance to the nearest leaf (degree-1 vertex), etc. The requisite identification $\Aut(\Gamma)\cong \Aut(T)$ will thus be immediate once the graphing structure is in place. Note, to that end, that every edge in \Cref{eq:grph.subst} corresponds to a unique edge connecting $\circ$ to $X_{\circ'}$ for some pair $\circ\ne \circ'\in \{*\}$. Said edges, for fixed $\circ\ne \circ'$, will constitute the graph of a measured Borel isomorphism
  \begin{equation*}
    \left(X_{\circ},\mu\right)
    \xrightarrow[\quad\cong\quad]{\quad\psi_{\circ,\circ'}=\psi_{\circ',\circ}^{-1}\quad}
    \left(X_{\circ'},\mu\right);
  \end{equation*}  
  By a slight notational abuse, we also identify (the graph of) $\psi_{\circ,\circ'}$ with either $X_{\circ,\circ'}$ and hence equip it with a probability measure denoted again by $\mu$. 
  
  This describes the overall edge set $E\subset Y\times Y$ of $\Gamma$ as a Borel set. To verify the degree-symmetry condition requisite in defining a graphing structure, observe that for Borel $A_{\circ\ne \circ'}\in X_{\circ\ne \circ'}$ (respectively)
  \begin{equation*}
    \int_{A_{\circ}}\sharp\left(x\to A_{\circ'}\right)\mathrm{d}\nu(x)
    =
    \left[
      \begin{aligned}
        \mu\left(A_{\circ}\times A_{\circ'}\cap \psi_{\circ,\circ'}\right)&\text{ if $X_{\circ,\circ'}$ are connected}\\
        0&\text{ if not}
      \end{aligned}
    \right]
    =
    \int_{A_{\circ'}}\sharp\left(y\to A_{\circ}\right)\mathrm{d}\nu(y).
  \end{equation*}
  The conclusion will follow once we note ergodicity, in turn immediate given that of the original Cayley graphing $\mathbf{C}_T$ being modified and the connectedness of the edge substitutes \Cref{eq:grph.subst}.
\end{proof}



\addcontentsline{toc}{section}{References}

\def\polhk#1{\setbox0=\hbox{#1}{\ooalign{\hidewidth
  \lower1.5ex\hbox{`}\hidewidth\crcr\unhbox0}}}


\Addresses

\end{document}